\documentclass[a4paper]{article}

\usepackage{amsfonts, amsmath, amssymb, amsthm}
\usepackage{graphicx}
\usepackage{fullpage}
\usepackage{float}

\usepackage{tikz}
\usetikzlibrary{calc}

\newtheorem{lemma}{Lemma}[section]
\newtheorem{theorem}{Theorem}[section]
\newtheorem{corollary}{Corollary}[theorem]
\newtheorem{conjecture}{Conjecture}[section]

\newcommand{\Z}{\mathbb{Z}}
\newcommand{\N}{\mathbb{N}}

\newcommand{\abs}[1]{\left| #1 \right|}
\renewcommand{\dim}[1]{\textnormal{dim}\left( #1 \right)}

\setlength{\parindent}{0em}
\setlength{\parskip}{1em}

\begin{document}
	\title{Most Clicks Problem in \textit{Lights Out}}
	\author{William Boyles\thanks{Department of Mathematics, North Carolina State University, Raleigh, NC 27695 (wmboyle2@ncsu.edu)}}
	\date{\today}
	\maketitle
	
	\begin{abstract}
		Consider a game played on a simple graph $G = (V, E)$ where each vertex consists of a clickable light. Clicking any vertex $v$ toggles the on/off state of $v$ and its neighbors. Starting from an initial configuration of lights, one wins the game by finding a solution: a sequence of clicks that turns off all the lights. When $G$ is a $5 \times 5$ grid, this game was commercially available from Tiger Electronics as \textit{Lights Out}. Restricting ourselves to solvable initial configurations, we pose a natural question about this game, the Most Clicks Problem (MCP): How many clicks does a worst-case initial configuration
		on $G$ require to solve? The answer to the MCP is already known for nullity 0 graphs: those on which every initial configuration is solvable. Generalizing a technique from Scherphius, we give an upper bound to the MCP for all grids of size $(6k - 1) \times (6k - 1)$. We show the value given by this upper bound exactly solves the MCP for all nullity 2 grids of this size. We show that all nullity 2 grids are of size $(6k - 1) \times (6k - 1)$, which means we solve the MCP for all nullity 2 square grids.
	\end{abstract}
	
	\section{Introduction}
	Consider a game played on a simple graph $G=(V,E)$ where each vertex consists of a clickable light.
	Clicking any vertex $v$ toggles the on/off state of $v$ and its neighbors.
	Starting from an initial configuration of lights, one wins the game by finding a solution: a sequence of clicks that turns off all the lights.
	When $G$ is a $5 \times 5$ grid, this game was commercially available from Tiger Electronics as \textit{Lights Out}.
	
	Restricting ourselves to solvable initial configurations, we pose a natural question about this game, the \textit{Most Clicks Problem (MCP)}:
	\begin{quote}
		How many clicks does a worst-case initial configuration on $G$ require to solve?
	\end{quote}
	This question is analogous to finding the most turns of a Rubik's Cube needed to solve any scramble, which is known to be 20 \cite{Rockiki2013}.

	Sutner was one of the first to study these sorts of games, mostly through the lens of cellular automata.
	He observes that clicking a vertex twice is equivalent to not clicking it at all, and the order in which one clicks vertices is irrelevant.
	Thus, solutions can be sets of vertices rather than sequences \cite{Sutner1989}.
	Sutner also introduces the \textit{All Ones Problem}, which asks if it is possible for some graph $G$ to go from all lights off to all lights on \cite{Sutner1988}.
	He shows that it is always possible for any $G$ \cite{Sutner1989}.
	However, finding minimal such sets, the \textit{Minimum All Ones Problem}, is NP-complete \cite{Sutner1988}.
	Several authors have shown for different types of graphs whether or not the Minimum All Ones Problem remains NP-Complete.
	Chen, Li, Wang, and Zhang give linear time algorithms for trees, unicyclic and bicyclic graphs, and graphs with bounded tree width \cite{CHEN200493}.
	Amin and Slater give a linear time algorithm for series-parallel graphs \cite{AminSlater1992}.
	Broersma and Li show that the Minimum All Ones Problem remains NP-Complete when $G$ is restricted to be bipartite or planar \cite{BROERSMA200760}.
	
	Goldwasser and Klostermeyer show that the related problem of determining if some initial configuration can be transformed through light clicks to a configuration with at least $k$ lights off, which they call the \textit{Maximizing Off Switches (MOS)} problem, is NP-Complete \cite{Goldwasser2000}.
	However, they note that the particular case where $k = \abs{V}$, which is equivalent to asking if an initial configuration is solvable, has a polynomial time algorithm.
	Goldwasser and Klostermeyer do not believe MOS is NP-complete when restricted to grid graphs \cite{Goldwasser2000}.
	
	In analyzing these sorts of games, authors have found useful connections with other parts of mathematics.
	An \textit{odd dominating set} of $G$ is a subset $V' \subseteq V$ such that every vertex contains an odd number of vertices from $V'$ in its neighborhood (itself and its edge-adjacent neighbors).
	Even dominating sets can be analogously defined.
	Consider the function $\Phi_G$ from subsets of $V$ to subsets of $V$ that maps which vertices are clicked to which lights change state.
	Odd dominating sets are exactly the solutions to the All Ones Problem, while even dominating sets are exactly the elements of $\ker{\Phi_G}$ \cite{Sutner1989}.
	Sutner showed that this function is linear with respect to symmetric difference.
	That is, for all $V_1, V_2 \subseteq V$,
	\begin{equation*}
		\Phi_G(V_1 \triangle V_2) = \Phi_G(V_1) \triangle \Phi_G(V_2).
	\end{equation*}
	Further, the set of all configurations form a vector space, of which the set of all solvable configurations is a subspace \cite{Sutner1988}. 
	
	Finding $\dim{\ker{\Phi_G}}$, the \textit{nullity} of $G$, is useful for determining the difficulty of solving problems like MCP and MOS. 
	For example, every initial configuration is solvable in exactly one way if and only if the nullity of $G$ is 0, meaning the answer to MOS is ``yes'' for all $k \leq \abs{V}$.
	In general, only 1 out of every $2^{\dim{\ker{\Phi_G}}}$ initial configurations are solvable, and each solvable configuration can be solved in $2^{\dim{\ker{\Phi_G}}}$ unique ways \cite{Sutner1989}.
	There are other unexpected connections with the nullity of $G$.
	Eriksson, Eriksson, and Sjöstrand show that $G$ is nullity 0 if and only if the number of perfect matchings in $G$ is odd \cite{ERIKSSON2001357}.
	Sutner also shows that one can calculate the nullity of an $m \times n$ grid as the GCD of two Chebyshev polynomials in $\Z_2[x]$ \cite{Sutner2000}.
	
	The MCP is trivial when the nullity of $G$ is 0.
	The initial configuration obtained by starting with all lights off and clicking every vertex once can only be solved in the same way of clicking every vertex once.
	Thus, the answer to the MCP for nullity 0 graphs is $\abs{V}$.
	
	When $G$ is an $n \times n$ grid, the nullity of $G$ is always even \cite{Sutner1989}.
	Thus, the next nullity of interest is nullity 2.
	The smallest $n \times n$ grid graph with nullity 2 is $n=5$.
	Scherphuis showed that the answer to the MCP for the $5 \times 5$ grid graph is 15 \cite{jaap}.
	However, the answer to the MCP for all other grid sizes with non-zero nullity is unknown. 
	We generalize Scherphuis's technique to solve the MCP for a $5 \times 5$ grid to give an upper bound the MCP for all grids of size $(6k-1) \times (6k-1)$.
	We show the value given by this upper bound exactly solves the MCP for all nullity 2 grids of this size.
	We conjecture that all nullity 2 grids are of size $(6k-1) \times (6k-1)$, which would mean we solved the MCP for all nullity 2 grids.
	
	\section{Even Parity Covers}
	Scherphuis's techniques rely heavily on even parity covers.
	Thus, we need to first show basic properties of even parity covers.
	
	\begin{theorem}\label{iff-even-parity}
		The sets $V_1$ and $V_2$ are solutions to the same initial configuration if and only if $V_1 \triangle V_2$ is an even parity cover of $G$.
	\end{theorem}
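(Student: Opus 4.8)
The plan is to reduce everything to the linearity of $\Phi_G$ over $\mathbb{F}_2$, which is already available to us from Sutner. First I would fix notation by identifying an initial configuration with the subset $C \subseteq V$ of lights that are on. Since clicking the vertices of a set $S$ toggles exactly the lights in $\Phi_G(S)$, turning every light off starting from $C$ is the same as toggling precisely the lit vertices; that is, $S$ is a solution to $C$ if and only if $\Phi_G(S) = C$. This characterization is the crux of the reduction, and I would record it first as a short observation before touching even parity covers at all.

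With that in hand, the statement ``$V_1$ and $V_2$ solve the same configuration'' becomes simply $\Phi_G(V_1) = \Phi_G(V_2)$. Working over $\mathbb{F}_2$ (symmetric difference), two subsets are equal exactly when their symmetric difference is empty, so this is equivalent to $\Phi_G(V_1) \triangle \Phi_G(V_2) = \emptyset$. Now I would invoke Sutner's linearity, $\Phi_G(V_1) \triangle \Phi_G(V_2) = \Phi_G(V_1 \triangle V_2)$, to rewrite the condition as $\Phi_G(V_1 \triangle V_2) = \emptyset$.

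To finish, I would connect $\Phi_G(V_1 \triangle V_2) = \emptyset$ to the definition of an even parity cover. By definition, $\Phi_G(W) = \emptyset$ means clicking the vertices of $W$ changes no light's state, i.e. every vertex sees an even number of vertices of $W$ in its closed neighborhood, which is precisely the condition for $W$ to be an even parity cover (equivalently $W \in \ker \Phi_G$). Taking $W = V_1 \triangle V_2$ yields both directions of the biconditional simultaneously, so no separate forward/backward argument is needed.

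I do not expect a genuine obstacle here, as the content is bookkeeping around the linearity of $\Phi_G$. The one point requiring care is the initial observation that a solution to $C$ is exactly a $\Phi_G$-preimage of $C$, since everything downstream depends on recasting ``solves the same configuration'' as an equality of images rather than a statement about sequences of clicks; the other is ensuring the stated definition of even parity cover lines up verbatim with $\ker \Phi_G = \Phi_G^{-1}(\emptyset)$.
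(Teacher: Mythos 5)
Your proposal is correct and follows essentially the same route as the paper: both reduce ``solve the same configuration'' to $\Phi_G(V_1) = \Phi_G(V_2)$, apply Sutner's linearity of $\Phi_G$ over symmetric difference, and identify even parity covers with $\ker \Phi_G$. The only difference is presentational --- you string the steps into a single chain of biconditionals, while the paper verifies the two implications separately --- so the mathematical content is identical.
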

	\begin{proof}
		Assume $V_1$ and $V_1$ are solutions to the same initial configuration.
		This means that they have the same image under $\Phi_G$.
		Notice,
		\begin{align*}
			\Phi_G(V_1 \triangle V_2) &= \Phi_G(V_1) \triangle \Phi_G(V_2) \\
				&= \Phi_G(V_1) \triangle \Phi_G(V_1) \\
				&= \emptyset.
		\end{align*}
		Thus, $V_1 \triangle V_2 \in \ker{\Phi_G}$ and is an even parity cover of $G$.
		Now assume that $V_1 \triangle V_2$ is an even parity cover of $G$.
		Notice,
		\begin{align*}
			\Phi_G(V_1) &= \Phi_G(V_1) \triangle \emptyset \\
				&= \Phi_G(V_1) \triangle \Phi_G(V_1 \triangle V_2) \\
				&= \Phi_G(V_1) \triangle \Phi_G(V_1) \triangle \Phi_G(V_2) \\
				&= \Phi_G(V_2).
		\end{align*}
		Thus, $V_1$ and $V_2$ are solutions to the same initial configuration.
	\end{proof}

	Theorem \ref{iff-even-parity} gives us a way to determine the solution to a given initial configuration that uses the fewest clicks.
	First, find all even parity covers of $G$ and any solution to the initial configuration.
	Next, take the symmetric difference of this solution with each even parity cover of $G$ to generate all solutions to the initial configuration.
	Finally, choose whichever solution is the smallest.

	\section{A Lower Bound on Nullity}
	For each even parity cover of a smaller grid, we can construct a unique even parity cover of a larger grid.
	This construction will allow us in the next section to generalize Scherphuis's proof technique to larger grids.
	
	\begin{theorem}\label{tiling-quiet-patterns}
		Let $d(n) = \dim{\ker{\Phi_G}}$ for $G$ an $n \times n$ grid.
		Then for all $n,k \in \N$,
		\begin{equation*}
			d(nk - 1) \geq d(n-1).
		\end{equation*}
	\end{theorem}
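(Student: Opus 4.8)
The plan is to exhibit an injective $\Z_2$-linear map from $\ker{\Phi_{G'}}$ into $\ker{\Phi_{G''}}$, where $G'$ is the $(n-1)\times(n-1)$ grid and $G''$ is the $(nk-1)\times(nk-1)$ grid. Since even parity covers are exactly the elements of the kernel, and the kernel is a $\Z_2$-vector space whose dimension is the nullity, such an injection immediately yields $d(n-1) \le d(nk-1)$. The map will be a tiling construction: I would decompose each side of $G''$ into $k$ blocks of length $n-1$ separated by $k-1$ single gap lines, using the identity $k(n-1)+(k-1)=nk-1$, so that $G''$ is carved into a $k\times k$ array of $(n-1)\times(n-1)$ blocks with empty gap rows and gap columns between them.

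Given an even parity cover $P$ of $G'$, I would place a copy of $P$ in each block and set every gap cell to $0$, choosing the orientation of each copy so that horizontally and vertically adjacent blocks are mirror images across the gap separating them (reflect the block in block-row $i$ and block-column $j$ according to the parities of $i$ and $j$). Write $T(P)$ for the resulting configuration. The bulk of the work is checking that $T(P)$ is an even parity cover of $G''$, i.e.\ that every vertex has even parity in its closed neighborhood. I would split vertices into cases: (i) a block vertex not adjacent to any gap sees only in-block neighbors and inherits $P$'s even parity; (ii) a block vertex adjacent to a gap sees its in-block neighbors together with empty gap cells, and because reflection sends a boundary line of the block to a boundary line of $P$, its parity again matches that of the corresponding vertex of $P$; (iii) a gap vertex is empty and its only nonempty neighbors are the two facing boundary cells of the adjacent blocks, which are equal precisely because those blocks are mirror images, so the count is even; (iv) a gap-crossing vertex has only empty neighbors. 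A point to confirm is that the outermost blocks never expose an interior line of $P$ at the boundary of $G''$, which holds because reflection carries within-block boundary lines to boundary lines of $P$.

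Finally I would verify that $T$ respects symmetric difference: since it is assembled from coordinate copying and reflection, $T(P_1 \triangle P_2) = T(P_1) \triangle T(P_2)$, so $T$ is $\Z_2$-linear, and it is injective because $P$ can be read back off any single block of $T(P)$. Hence $T$ embeds $\ker{\Phi_{G'}}$ into $\ker{\Phi_{G''}}$ and the dimension inequality follows. I expect the main obstacle to lie in the boundary bookkeeping of cases (ii) and (iii): fixing the reflection convention so that every pair of adjacent blocks genuinely mirrors across its shared gap, and confirming that this forces the facing boundary cells to agree while keeping each block vertex's parity equal to that of its counterpart in $P$.
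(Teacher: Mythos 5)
Your proposal is correct and follows essentially the same approach as the paper: the same decomposition of the $(nk-1)\times(nk-1)$ grid into a $k \times k$ array of $(n-1)\times(n-1)$ blocks separated by empty gap lines, the same alternating-reflection tiling of the even parity cover, and the same case analysis on block, gap, and gap-intersection vertices. Your explicit framing of the construction as an injective $\Z_2$-linear map is a slightly more formal way of stating the paper's closing independence argument, but it is the same underlying idea.
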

	\begin{proof}
		Since $nk - 1 = k(n-1) + (k-1)$, an $(nk-1) \times (nk-1)$ grid consists of a $k \times k$ grid of $(n-1) \times (n-1)$ grids, each separated by a horizontal or vertical strip of height or width 1.
	
		Let $Q$ be an even parity cover for an $(n-1) \times (n-1)$ grid.
		Let $R$ be $Q$ tiled $k$ times horizontally and vertically onto an $(nk-1) \times (nk-1)$ grid, where each tile is a horizontal/vertical reflection of its vertical/horizontal neighboring tiles, as shown below.
		
		\begin{figure}[H]
			\centering
%
%
%
			\includegraphics[width=0.5\textwidth]{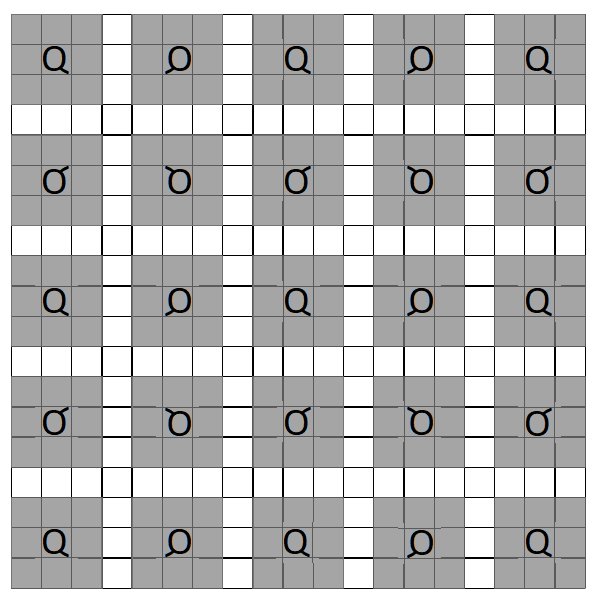}
			\caption{Pattern $Q$ tiled 5 times.}	
		\end{figure}
	
		Consider any vertex $v$ in the $(nk-1) \times (nk-1)$ grid.
		Let $N(v) = \{u \in V \mid (u,v) \in E \text{ or } u=v\}$.
		If $v$ is in one of the areas containing a reflection of $Q$, then $\abs{N(v) \cap R}$ is even because $Q$ is an even parity cover.
		Otherwise, $v$ is on the border between reflections of $Q$.
		If $v$ is in the intersection of a horizontal and vertical border, then $\abs{N(v) \cap R} = 0$, which is even.
		Otherwise, $N(v)$ contains two vertices that are in neighboring reflections of $Q$.
		Since we reflected $Q$ across this border, the two vertices are either both in $R$ or neither are in $R$.
		In either case, $\abs{N(v) \cap R}$ is even.
		So, $R$ is an even parity cover.
		
		So, each even parity cover of an $(n-1) \times (n-1)$ grid allows us to generate a unique even parity cover for an $(nk-1) \times (nk-1)$ grid.
		Since these even parity covers are independent on a $(n-1) \times (n-1)$ grid, they are also independent on the larger $(nk-1) \times (nk-1)$ grid.
		Thus,
		\begin{equation*}
			d(nk-1) \geq d(n-1).
		\end{equation*}
	\end{proof}

	Sutner shows a generalization of this result to product graphs \cite{Sutner1988_2}.

	\section{Solving the MCP}
	We are now ready to solve the MCP for all nullity 2 grids of size $(6k-1) \times (6k-1)$.
	Applying Theorem \ref{tiling-quiet-patterns} for $n=6$, we get that for all $k \in \N$,
	\begin{equation*}
		d(6k - 1) \geq d(5) = 2.
	\end{equation*}
	So, tiling the even parity covers from a $5 \times 5$ grid give even parity covers of $(6k-1) \times (6k-1)$ grids.
	For example, notice how the $17 \times 17$ even parity covers are simply the $5 \times 5$ even parity covers tiled $k=3$ times.
	
	\begin{figure}[H]
		\centering
		\includegraphics[width=0.32\textwidth]{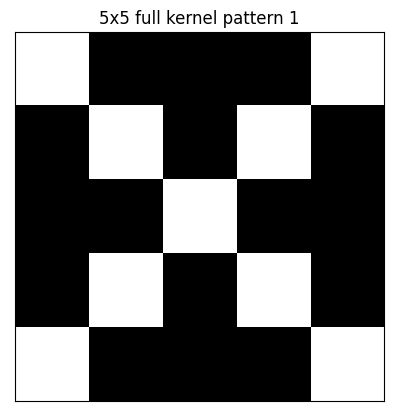}
		\includegraphics[width=0.32\textwidth]{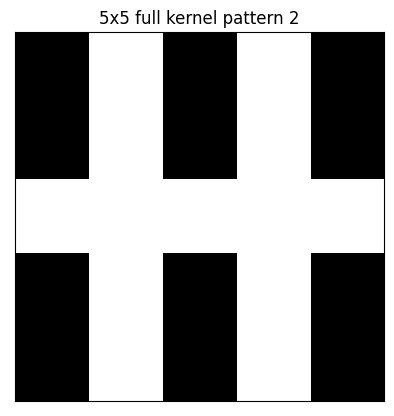}
		\includegraphics[width=0.32\textwidth]{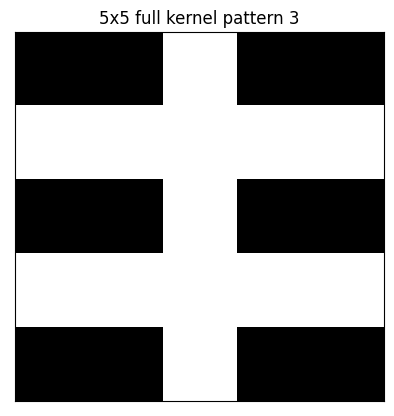}
		\includegraphics[width=0.32\textwidth]{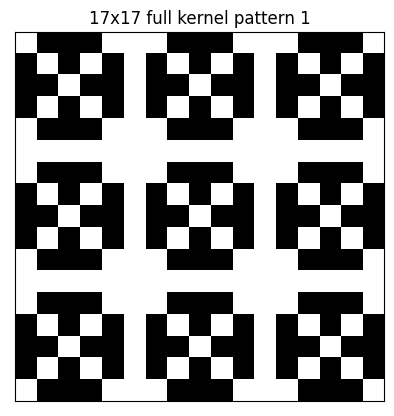}
		\includegraphics[width=0.32\textwidth]{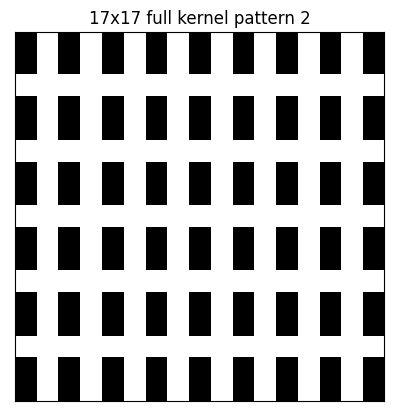}
		\includegraphics[width=0.32\textwidth]{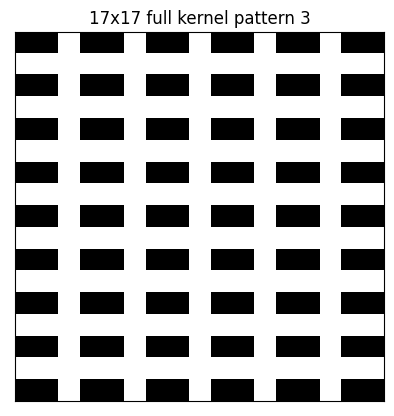}
		\caption{Non-trivial elements of $\ker{\Phi_G}$ for $G$ a $5 \times 5$ and $17 \times 17$ grid}
	\end{figure}
	
	We will apply the same techniques that Scherphius \cite{jaap} used to solve the MCP for a $5 \times 5$ board to all $(6k-1) \times (6k-1)$ boards.
	\begin{theorem}\label{min-moves-problem-6k-1x6k-1}
		Let $k \in \N$.
		Then the answer to the MCP for a $(6k - 1) \times (6k - 1)$ grid is at most $26k^2 - 12k + 1$.
		This bound is exactly the answer to the MCP when $d(6k-1)=2$.
	\end{theorem}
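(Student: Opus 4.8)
The plan is to work with the four explicit solutions generated by the two tiled even parity covers and to bound the worst case by an averaging argument. By Theorem~\ref{tiling-quiet-patterns} with $n=6$, tiling the two generators of $\ker{\Phi_G}$ for the $5 \times 5$ grid produces even parity covers $R_1, R_2$ of the $(6k-1) \times (6k-1)$ grid; I set $R_3 = R_1 \triangle R_2$. For any solvable configuration with some solution $S$, Theorem~\ref{iff-even-parity} shows that $S$, $S \triangle R_1$, $S \triangle R_2$, and $S \triangle R_3$ are all solutions, and the number of clicks needed is the smallest of their four sizes. When $d(6k-1)=2$ these are the only solutions.

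For the upper bound I would partition the vertices into the region $A$ covered by none of $R_1, R_2, R_3$ together with $B = R_1 \setminus R_2$, $C = R_2 \setminus R_1$, and $D = R_1 \cap R_2$. A short case-check on membership, using $R_3 = R_1 \triangle R_2$, shows that each vertex of $A$ lies in all four of the listed solutions or in none, while each vertex of $B \cup C \cup D$ lies in exactly two of them. Hence the four solution sizes sum to $4|S \cap A| + 2(|V| - |A|) \leq 2|A| + 2|V|$, so the minimum of the four is at most the average $\tfrac{1}{2}(|V| + |A|)$. This inequality holds for every solvable configuration and uses only the two tiled covers, so it is valid regardless of the true nullity.

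The remaining work for the bound is to count $|A|$. Writing the board as a $k \times k$ array of $5 \times 5$ blocks separated by strips of width $1$, the separating strips lie in no cover and contribute $(k-1)(11k-1) = 11k^2 - 12k + 1$ vertices, while each block restricts $R_1, R_2, R_3$ to reflected copies of the three $5 \times 5$ quiet patterns and therefore contributes the $5$ cells of that block covered by none of them, for $5k^2$ more. Thus $|A| = 16k^2 - 12k + 1$ and $\tfrac{1}{2}(|V| + |A|) = 26k^2 - 12k + 1$, giving the claimed upper bound for every $k$.

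For tightness when $d(6k-1)=2$, note that $R_1, R_2, R_3$ are then the only nontrivial even parity covers, so $A$ is genuinely the set of cells in no cover. The same block-by-block count gives $|B| = |C| = 8k^2$ and $|D| = 4k^2$, all even, since each $5 \times 5$ block contributes $8$, $8$, and $4$ cells to $B$, $C$, $D$. I would then take $S$ to contain all of $A$ together with exactly half of each of $B$, $C$, $D$; the membership analysis forces all four solution sizes to equal the average $26k^2 - 12k + 1$, and since these are the only solutions this configuration requires exactly that many clicks. The main obstacle is the base case: I must pin down the three $5 \times 5$ quiet patterns, verify the region sizes are $(5,8,8,4)$, and confirm that the reflected tiling keeps the separators uncovered and keeps $|B|, |C|, |D|$ even. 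This evenness is precisely what lets the averaging bound be attained, so it is the crux of the equality half of the statement.
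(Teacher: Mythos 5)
Your proposal is correct, and its skeleton matches the paper's: both use Theorem \ref{tiling-quiet-patterns} to produce the tiled covers, both use Theorem \ref{iff-even-parity} to enumerate the competing solutions, and your sets $A$, $B$, $C$, $D$ are exactly the paper's four regions (your $A$ is the paper's region 4; your $B$, $C$, $D$ are regions 1--3 up to relabeling of the covers). The differences are in packaging, and one of them is substantive. For the upper bound, the paper derives one inequality per nontrivial cover from minimality of a worst-case solution and then solves the resulting ILP, whose optimum $(2k^2, 4k^2, 4k^2)$ amounts to ``half of each region''; your observation that every vertex outside $A$ lies in exactly two of the four solutions, so the minimum is at most the average $\tfrac{1}{2}\left(\abs{V} + \abs{A}\right)$, is precisely the sum of the paper's three constraints, stated more compactly and without having to certify an ILP optimum. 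For the equality half, your route is genuinely tighter than the paper's: the paper asserts that when $d(6k-1)=2$ all covers have been considered and the bound is attained (its step ``we can set $R_4 = 16k^2 - 12k + 1$'' presumes, without exhibiting, a configuration achieving the maximum), whereas you construct one explicitly --- all of $A$ plus exactly half of each of $B$, $C$, $D$, well defined because $4k^2$ and $8k^2$ are even --- and verify that all four (hence, by nullity 2, all) solutions of the configuration obtained by clicking that set have size $26k^2 - 12k + 1$. That explicit extremal configuration closes a step the paper leaves implicit. The only outstanding item, which you flag yourself, is the finite base-case check on the $5 \times 5$ quiet patterns (per-block region sizes $5, 8, 8, 4$, and the fact that the reflected tiling leaves the separator strips uncovered, which is built into the construction in Theorem \ref{tiling-quiet-patterns}); the paper takes the same facts from its figures, so this is not a gap peculiar to your argument.
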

	\begin{proof}
		Applying Theorem \ref{tiling-quiet-patterns} with $n=6$, the even parity covers of the $5 \times 5$ grid will tile a $(6k - 1) \times (6k - 1)$ grid.
		We can partition the grid into four regions based on which of these even parity covers they are a part of.
		Below are the regions for the $5 \times 5$ board.
		For larger boards, these regions would be tiled exactly as described in Theorem \ref{tiling-quiet-patterns}.
		
		\begin{figure}[H]
			\centering
			\includegraphics[width=0.49\textwidth]{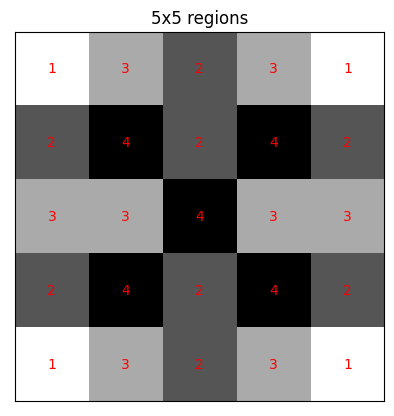}
			\caption{The four regions of a $5 \times 5$ grid}
		\end{figure}
		
		Region 1 is the intersection of even parity covers 2 and 3;
		region 2 is the intersection of even parity covers 1 and 2;
		region 3 is the intersection of even parity covers 1 and 3;
		region 4 is the intersection of the complements.
		Region 1 contains $4k^2$ vertices;
		regions 2 and 3 contain $8k^2$ vertices;
		region 4 contains the remaining $16k^2 - 12k + 1$ vertices.
		
		Let $Y$ be a worst-case initial configuration.
		Let $X$ be a solution to $Y$ that uses the fewest clicks among all solutions to $Y$.
		Let $R_i$ be the number of vertices in $X$ also contained in region $i$.
		Then the solution uses $\abs{X} = R_1 + R_2 + R_3 + R_4$ clicks.
		
		If we take the symmetric difference of $X$ and even parity cover 1, we obtain an equivalent solution that uses $R_1 + (8k^2 - R_2) + (8k^2 - R_3) + R_4$ clicks.
		Since we assumed $X$ was a minimal solution,
		\begin{equation*}
			R_1 + R_2 + R_3 + R_4 \leq R_1 + (8k^2 - R_2) + (8k^2 - R_3) + R_4.
		\end{equation*}
		Rearranging,
		\begin{equation*}
			R_2 + R_3 \leq 8k^2.
		\end{equation*}
		If we take the symmetric difference of $X$ and even parity cover 2, we obtain an equivalent solution that uses $(4k^2 - R_1) + (8k^2 - R_2) + R_3 + R_4$ clicks.
		Since we assumed $X$ was a minimal solution,
		\begin{equation*}
			R_1 + R_2 + R_3 + R_4 \leq (4k^2 - R_1) + (8k^2 - R_2) + R_3 + R_4.
		\end{equation*}
		Rearranging,
		\begin{equation*}
			R_1 + R_2 \leq 6k^2.
		\end{equation*}
		If we take the symmetric difference of $X$ and even parity cover 3, we obtain an equivalent solution that uses $(4k^2 - R_1) + R_2 + (8k^2 - R_3) + R_4$ clicks.
		Since we assumed $X$ was already a minimal solution,
		\begin{equation*}
			R_1 + R_2 + R_3 + R_4 \leq (4k^2 - R_1) + R_2 + (8k^2 - R_3) + R_4.
		\end{equation*}
		Rearranging,
		\begin{equation*}
			R_1 + R_3 \leq 6k^2.
		\end{equation*}
	
		We see that $R_4$ is not constrained by any of the above inequalities, only that region 4 contains $16k^2 - 12k + 1$ squares.
		Since we assumed $Y$ is a worst-case initial configuration, we can set $R_4 = 16k^2 - 12k + 1$.
		Meanwhile, $R_1$, $R_2$, and $R_3$ are constrained by the following integer linear program (ILP) where we seek to maximize $R_1 + R_2 + R_3$:
		\begin{equation*}
			\begin{bmatrix}
				0 & 1 & 1 \\
				1 & 1 & 0 \\
				1 & 0 & 1 
			\end{bmatrix}
			\begin{bmatrix}
				R_1 \\
				R_2 \\
				R_3
			\end{bmatrix}
			\leq
			\begin{bmatrix}
				8 \\
				6 \\
				6
			\end{bmatrix}k^2.
		\end{equation*}
		We see that
		\begin{equation*}
			\begin{bmatrix}
				R_1 \\
				R_2 \\
				R_3
			\end{bmatrix}
			=
			\begin{bmatrix}
				2 \\
				4 \\
				4
			\end{bmatrix}k^2
		\end{equation*}
		is the solution to the ILP.
		So,
		\begin{align*}
			\abs{X} &=  R_1 + R_2 + R_3 + R_4 \\
				&= 2k^2 + 4k^2 +  4k^2 + (16k^2 - 12k + 1) \\
				&= 26k^2 - 12k + 1.
		\end{align*}
		If $\dim{\ker{\Phi}} > 2$ there will be more even parity covers that would create more restrictive constraints.
		However, if $\dim{\ker{\Phi}} = 2$, we have considered all even parity covers and have solved the MCP.
	\end{proof}

	The $5 \times 5$, $17 \times 17$, $41 \times 41$, $53 \times 53$, $77 \times 77$, grids are those of size $(6k-1) \times (6k-1)$ less than 100 with nullity 2.
	Thus, we find that the answers to the MCP for these grids are 15, 199, 1191, 1999, and 4239 respectively.
	
	We will now show that all nullity 2 grids are of size $(6k-1) \times (6k-1)$, meaning the results of Theorem \ref{min-moves-problem-6k-1x6k-1} exactly solve the MCP for all nullity 2 grids.
	
	\begin{lemma}\label{all_nullity2_even}
		If $d(n) = 2$, then $n$ is odd.
	\end{lemma}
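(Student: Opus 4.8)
The plan is to prove the contrapositive: if $n$ is even, then $d(n) \neq 2$. I would route everything through Sutner's characterization of grid nullities by Chebyshev-type polynomials over $\Z_2$ \cite{Sutner2000}. Let $f_k \in \Z_2[x]$ be the Fibonacci-type polynomials defined by $f_0 = 0$, $f_1 = 1$, and $f_{k+1} = x f_k + f_{k-1}$. The first step is to recall, or re-derive via a column-by-column transfer-matrix analysis of quiet patterns, that
\begin{equation*}
	d(n) = \deg \gcd\big(f_{n+1}(x),\, f_{n+1}(x+1)\big),
\end{equation*}
and to confirm it reproduces the known small values $d(1) = d(2) = d(3) = 0$, $d(4) = 4$, and $d(5) = 2$.

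The key algebraic step exploits that $n$ even forces the index $n+1$ to be odd. Using the standard addition identity $f_{a+b} = f_{a+1}f_b + f_a f_{b-1}$ with $a = m$ and $b = m+1$, I get $f_{2m+1} = f_{m+1}^2 + f_m^2$. Since squaring is additive over $\Z_2$, i.e. $(p+q)^2 = p^2 + q^2$, this is a perfect square:
\begin{equation*}
	f_{2m+1}(x) = \big(f_{m+1}(x) + f_m(x)\big)^2 =: h(x)^2.
\end{equation*}
Hence for even $n = 2m$ both $f_{n+1}(x) = h(x)^2$ and $f_{n+1}(x+1) = h(x+1)^2$ are squares, so, using $\gcd(p^2, q^2) = \gcd(p,q)^2$,
\begin{equation*}
	d(2m) = \deg \gcd\big(h(x)^2,\, h(x+1)^2\big) = 2\,\deg \gcd\big(h(x),\, h(x+1)\big).
\end{equation*}

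It remains to rule out $\deg \gcd(h(x), h(x+1)) = 1$, which would be the only way to reach $d(2m) = 2$. Over $\Z_2$ the only monic degree-$1$ polynomials are $x$ and $x+1$, so a common degree-$1$ factor means $h(x)$ and $h(x+1)$ share a root in $\Z_2$. A short case check shows any such shared root forces $h(0) = h(1) = 0$; for instance, $x \mid h(x)$ gives $h(0) = 0$ and $x \mid h(x+1)$ gives $h(1) = 0$. But then both $x$ and $x+1$ divide each of $h(x)$ and $h(x+1)$, so $x(x+1) \mid \gcd(h(x), h(x+1))$ and the gcd has degree at least $2$, a contradiction. Thus $\deg \gcd(h(x), h(x+1)) \neq 1$, whence $d(2m) \neq 2$, establishing the contrapositive and hence the lemma.

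The hard part will be the first step: pinning down and honestly justifying the formula for $d(n)$ in exactly the self-shifted form $\gcd(f_{n+1}(x), f_{n+1}(x+1))$, since a derivation from the tridiagonal column operator must contend with its possible non-diagonalizability over a field of characteristic $2$. If quoting Sutner as a black box turns out to be awkward, a fallback is to record the companion doubling identity $f_{2m}(x) = x\, f_m(x)^2$ and observe that $f_{n+1}$ is a perfect square precisely when $n$ is even; that parity dichotomy is the only feature of the polynomial formula the argument actually uses.
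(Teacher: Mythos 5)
Your proposal is correct, and it takes a genuinely different route from the paper's proof. Both arguments run through the same nullity formula $d(n) = \deg\gcd\left(f_{n+1}(x), f_{n+1}(x+1)\right)$ (which the paper likewise quotes rather than proves, so your worry about justifying it puts you on equal footing), and both ultimately rest on the fact that $f_{n+1}$ is a perfect square in $\Z_2[x]$ when $n+1$ is odd; the difference is how that fact is obtained and exploited. The paper argues by contradiction: citing Boyles for $x \nmid f_{n+1}(x)$ and $(x+1) \nmid f_{n+1}(x)$, it pins down the hypothetical degree-2 gcd as the unique remaining possibility $x^2+x+1$, then cites Hunziker, Machiavelo, and Park (that $f_{n+1}$ is the square of a square-free polynomial) to promote this irreducible factor to $(x^2+x+1)^2$ dividing both $f_{n+1}(x)$ and $f_{n+1}(x+1)$, using that $x^2+x+1$ is fixed by the substitution $x \mapsto x+1$; hence $d(n) \geq 4$, a contradiction. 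You instead derive the square structure elementarily from the addition identity, $f_{2m+1} = f_{m+1}^2 + f_m^2 = (f_{m+1}+f_m)^2$ in characteristic 2, factor the gcd as $\gcd\left(h(x)^2, h(x+1)^2\right) = \gcd\left(h(x), h(x+1)\right)^2$ in the UFD $\Z_2[x]$, and eliminate the only dangerous case, an inner gcd of degree 1, by evaluation: a common linear factor forces $h(0) = h(1) = 0$, so $x(x+1)$ divides both $h(x)$ and $h(x+1)$, contradicting degree 1 (your case check is sound, and the $x+1$ case is symmetric to the $x$ case). What your route buys is self-containment --- the only black box is the nullity formula itself, with no appeal to the square-free theorem or to the non-divisibility lemmas --- plus a slightly stronger conclusion: $d(n)$ is even for every even $n$ and is never 2, i.e.\ it is 0 or at least 4. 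What the paper's route buys is brevity given the cited literature, and it pinpoints the exact excluded factor $x^2+x+1$. Notably, the paper's appeal to square-freeness is not actually needed (being a perfect square already suffices to promote an irreducible factor to its square), which is precisely the simplification your proof makes explicit.
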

	\begin{proof}
		Assume for the sake of contradiction that $d(n) = 2$, but $n$ is even.
		Then $n = 2k$ for some natural number $k$.
		Then
		\begin{equation*}
			\deg{\gcd{\left(f_{2k+1}(x), f_{2k+1}(x+1)\right)}} = 2,
		\end{equation*}
		where $f_n(x)$ is the $n$th Fibonacci polynomial as defined by Boyles \cite{Boyles2022}.
		Since $2k+1$ is odd, $x \not\mid f_{2k+1}(x)$ and $x+1 \not\mid f_{2k+1}(x)$, as Boyles shows \cite{Boyles2022}.
		So, both $x$ and $x+1$ do not divide $\gcd{\left(f_{2k+1}(x), f_{2k+1}(x+1)\right)}$.
		Instead, the result of this GCD must be the degree 2 polynomial that is not divisible by $x$ or $x+1$.
		The only such polynomial is the irreducible polynomial $x^2 + x + 1$.
		
		However, as Hunziker, Machiavelo, and Park show, since $2k+1$ is odd, $f_{2k+1}(x)$ is the square of a square-free polynomial \cite{HUNZIKER2004465}.
		So, since $x^2 + x + 1$ is irreducible and $f_{2k+1}(x)$ is a square, we must have
		\begin{equation*}
			(x^2+x+1)^2 \mid f_{2k+1}(x).
		\end{equation*}
		Further, since $(x+1)^2 + (x+1) + 1 = x^2 + x + 1$, we also have 
		\begin{equation*}
			(x^2+x+1)^2 \mid f_{2k+1}(x+1).
		\end{equation*}
		Therefore, $(x^2+x+1)^2 \mid \gcd{\left(f_{2k+1}(x), f_{2k+1}(x+1)\right)}$, meaning $d(n) \geq 4$, a contradiction.
	\end{proof}

	\begin{theorem}\label{nullity2-5mod6}
		If $d(n) = 2$, then $n \equiv -1 \mod 6$.
	\end{theorem}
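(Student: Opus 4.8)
The plan is to combine Lemma \ref{all_nullity2_even} with a direct analysis of the polynomial whose degree computes the nullity, reducing the whole theorem to pinning down $n+1$ modulo $3$. Writing $m = n+1$, I will use that $d(n) = \deg\gcd\left(f_m(x), f_m(x+1)\right)$ together with the rank-of-apparition facts $x \mid f_m \iff 2 \mid m$, $\;x+1 \mid f_m \iff 3 \mid m$, and $x^2+x+1 \mid f_m \iff 5 \mid m$. By Lemma \ref{all_nullity2_even}, $n$ is odd, so $m$ is even; hence $n \equiv -1 \bmod 6$ is equivalent to $3 \mid m$, and it suffices to prove $x+1 \mid f_m$.

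First I would observe that $g(x) := \gcd\left(f_m(x), f_m(x+1)\right)$ is invariant under $x \mapsto x+1$. Indeed, $x \mapsto x+1$ is a ring automorphism of $\Z_2[x]$, so it commutes with taking gcd, and since $f_m(x+2) = f_m(x)$ in characteristic $2$ we get $g(x+1) = \gcd\left(f_m(x+1), f_m(x)\right) = g(x)$. A short computation then shows that the only monic degree-$2$ polynomials fixed by $x \mapsto x+1$ are $x^2+x = x(x+1)$ and $x^2+x+1$. Since $d(n) = \deg g = 2$, the gcd must be one of these two, and the whole argument comes down to eliminating the second case.

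Next I would rule out $g = x^2+x+1$. If $x^2+x+1 \mid g$, then $x^2+x+1 \mid f_m$, so $5 \mid m$; the divisibility-sequence property $f_5 \mid f_m$ then yields $(x^2+x+1)^2 = f_5 \mid f_m$, and because $x^2+x+1$ is itself fixed by $x \mapsto x+1$ we also get $(x^2+x+1)^2 \mid f_m(x+1)$. Hence $(x^2+x+1)^2 \mid g$ and $\deg g \geq 4$, contradicting $\deg g = 2$. This is the same phenomenon exploited in Lemma \ref{all_nullity2_even}, but the substitution here is the main point to get right: for even $m$ the polynomial $f_m$ need \emph{not} be a perfect square, so I cannot force an even multiplicity from square-freeness and must instead extract the repeated factor from the divisibility sequence (via $f_5 = (x^2+x+1)^2$).

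The only surviving possibility is $g = x(x+1)$, whence $x+1 \mid g \mid f_m$, i.e. $3 \mid m$. Combined with $m$ even from Lemma \ref{all_nullity2_even}, this gives $6 \mid m = n+1$, that is $n \equiv -1 \bmod 6$. The one routine input I am deferring is the list of rank-of-apparition facts, each of which follows by evaluating the Fibonacci-polynomial recurrence $f_{j+1} = x f_j + f_{j-1}$ at $x=0$, at $x=1$, and at a root of $x^2+x+1$ (giving periods $2$, $3$, and $5$ respectively), together with the standard fact that the $f_j$ form a divisibility sequence.
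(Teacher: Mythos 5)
Your proof is correct, but it takes a genuinely different route from the paper's. The paper's argument is essentially a citation: it writes $n = b \cdot 2^k - 1$ with $b$ odd, invokes Boyles' two-case structure theorem relating $d(b\cdot 2^k - 1)$ to $d(b-1)$, and uses Lemma \ref{all_nullity2_even} to kill the case $d(b-1) = d(n) = 2$ (since $b-1$ is even), leaving $k = 1$ and $3 \mid b$, i.e. $n = 2b - 1 \equiv -1 \bmod 6$. You instead analyze the gcd polynomial $g = \gcd\left(f_{n+1}(x), f_{n+1}(x+1)\right)$ directly: its invariance under the automorphism $x \mapsto x+1$, the classification of the invariant monic quadratics over $\Z_2$ as exactly $x(x+1)$ and $x^2+x+1$, and the elimination of $x^2+x+1$ via its rank of apparition $5$ together with $f_5 = (x^2+x+1)^2$ and the divisibility-sequence property. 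All of these steps check out, and your remark about why you cannot reuse the square-freeness argument of Lemma \ref{all_nullity2_even} (it applies only to odd index, i.e. even $n$) is exactly the right point of care; the deferred rank-of-apparition facts are routine, verified by the periodic sequences $f_j(0), f_j(1), f_j(\omega)$ with periods $2, 3, 5$. What your approach buys: it is self-contained modulo elementary facts about Fibonacci polynomials over $\Z_2$, it avoids dependence on Boyles' case theorem, and it identifies the kernel polynomial of every nullity-2 grid as exactly $x(x+1)$. What the paper's approach buys: routing through Boyles' theorem yields the stronger structural fact that $n = 2b-1$ with $b$ an \emph{odd} multiple of $3$, which is precisely what Corollary \ref{nullity2-5mod12} ($n \equiv -7 \bmod 12$) relies on. Your argument as stated only delivers the mod-$6$ conclusion; recovering the corollary along your lines would require one further computation in the same spirit, e.g. $f_{12} = x^3(x+1)^8$, so that $12 \mid n+1$ would force $\deg g \geq 6$ and hence nullity at least $6$.
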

	\begin{proof}
		Let $n$ bw a natural number such that $d(n) = 2$.
		Let $n = b\cdot2^k - 1$ for some odd natural number $b$ and natural number $k$.
		As Boyles shows, there are 2 cases \cite{Boyles2022},
		\begin{enumerate}
			\item
				$d(b-1) = 0$ and $b \equiv 0 \mod 3$, meaning $d(n) = d(2b-1) = 2$.
			\item
				$d(b-1) = d(n) = 2$.
		\end{enumerate}
		As we've shown in Lemma \ref{all_nullity2_even}, case 2 is not possible.
		So, $k = 1$ and $b$ is a multiple of 3.
		Let $b = 3m$ for some natural number $m$.
		So, $n = 2b - 1 = 2(3m) - 1 = 6m - 1 \equiv -1 \mod 6$, as desired. 
	\end{proof}

	Together, Theorems \ref{min-moves-problem-6k-1x6k-1} and \ref{nullity2-5mod6} mean that we have exactly solved the MCP for all square nullity 2 boards.

	\begin{corollary}\label{nullity2-5mod12}
		If $d(n) = 2$, then $n \equiv -7 \mod 12$.
	\end{corollary}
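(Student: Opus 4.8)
The plan is to extract the needed strengthening directly from the proof of Theorem \ref{nullity2-5mod6} rather than argue from scratch. That theorem already gives $n \equiv -1 \mod 6$, so $n = 6m - 1$ for some natural number $m$. Modulo $12$, the residues congruent to $-1 \mod 6$ are exactly $5$ and $11$; since $-7 \equiv 5 \mod 12$, proving the corollary amounts to ruling out $n \equiv 11 \equiv -1 \mod 12$. Equivalently, I must show that $m$ is \emph{odd}.

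First I would revisit the setup inside Theorem \ref{nullity2-5mod6}: there $n$ is written as $n = b \cdot 2^k - 1$ with $b$ an odd natural number, and the argument forces $k = 1$ together with $b = 3m$. The key point is that the oddness of $b$ is not incidental but built into this representation. Since $b = 3m$ and $3$ is odd, $b$ is odd if and only if $m$ is odd; hence $m$ must be odd. Writing $m = 2t + 1$ then gives $n = 6m - 1 = 6(2t+1) - 1 = 12t + 5$, so $n \equiv 5 \equiv -7 \mod 12$, as desired.

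The only real content here is recognizing that the parity of $m$ is pinned down by the decomposition $n = b \cdot 2^k - 1$ that the nullity and Fibonacci-polynomial analysis already uses; once that is noticed, the remaining step is a one-line congruence computation. I therefore expect no genuine obstacle: the substantive work was effectively carried out inside Theorem \ref{nullity2-5mod6}, and this corollary merely reads off a parity fact that the earlier proof left implicit.
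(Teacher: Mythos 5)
Your proposal is correct and follows essentially the same route as the paper: both arguments reach back into the proof of Theorem \ref{nullity2-5mod6} to extract that $n = 2b-1$ with $b$ an \emph{odd} multiple of $3$, and then finish with the same one-line congruence computation (the paper writes $b = 6k-3$ where you write $b = 3(2t+1)$, an identical parametrization). No gap; your observation that the oddness of $b$ is the whole content of the corollary is exactly what the paper's proof uses.
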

	\begin{proof}
		Let $n$ be a natural number such that $d(n) = 2$.
		We showed in the proof to Theorem \ref{nullity2-5mod6} that $n = 2b - 1$, where $b$ is an odd multiple of 3.
		Thus, $b = 6k - 3$ for some natural number $k$.
		So, $n = 2(6k - 3) - 1 = 12k - 7 \equiv -7 \mod 12$, as desired.
	\end{proof}

	\section{Conjectures \& Future Work}
	\begin{conjecture}\label{infinite-nullity-2}
		There are infinitely many $n$ such that $d(n) = 2$.
	\end{conjecture}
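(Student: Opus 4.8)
The plan is to reduce the conjecture to a statement purely about nullity-$0$ grids lying in a fixed residue class, and then to attack that statement through the self-similar structure of the Fibonacci polynomials over $\Z_2$. The reduction is immediate from the machinery already in place: in the proof of Theorem \ref{nullity2-5mod6} we saw (case 1) that if $b$ is an odd multiple of $3$ with $d(b-1) = 0$, then $n = 2b - 1$ satisfies $d(n) = 2$. Writing $n' = b - 1$, the hypotheses ``$b$ odd'' and ``$3 \mid b$'' together become $n' \equiv 2 \pmod 6$, and distinct values of $n'$ give distinct $n = 2n' + 1$. Hence it suffices to prove:
\begin{quote}
	there are infinitely many $n' \equiv 2 \pmod 6$ with $d(n') = 0$.
\end{quote}
The first five such $n'$ are $2, 8, 20, 26, 38$, which recover exactly the known nullity-$2$ grids $5, 17, 41, 53, 77$ of Theorem \ref{min-moves-problem-6k-1x6k-1}.

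Next I would pass to the polynomial side. Using the characterization underlying Lemma \ref{all_nullity2_even}, namely $d(m) = \deg\gcd\!\left(f_{m+1}(x), f_{m+1}(x+1)\right)$ in $\Z_2[x]$, the target condition $d(n') = 0$ becomes the coprimality statement $\gcd\!\left(f_{n'+1}(x), f_{n'+1}(x+1)\right) = 1$. The two standard doubling identities, which over $\Z_2$ read $f_{2j}(x) = x\, f_j(x)^2$ and $f_{2j+1}(x) = \left(f_j(x) + f_{j+1}(x)\right)^2$, relate the factorization of $f_m$ to that of the $f_j$ with $j$ roughly half of $m$. The aim would be to distill from these identities a single index map $\psi$ that is strictly increasing, preserves the residue class $2 \pmod 6$, and preserves coprimality of $f_m(x)$ with its shift $f_m(x+1)$; iterating $\psi$ from the base case $n' = 2$ would then emit an infinite admissible family and finish the proof.

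The hard part is that no such map can be purely arithmetic. The nullity-$0$ locus is controlled by which roots of unity $\zeta$ (equivalently, which common roots $\alpha = \zeta + \zeta^{-1}$ of $f_{n'+1}(x)$ and $f_{n'+1}(x+1)$) appear over $\overline{\Z_2}$, and the doubling identities make this locus self-similar in the binary expansion of $n'$ rather than periodic in $n'$ itself; meanwhile the constraint $n' \equiv 2 \pmod 6$ is an arithmetic (essentially $3$-adic) condition. Threading a base-$2$ self-similar set through a fixed residue class modulo $6$ is exactly the interaction that resists elementary treatment, and the naive guesses already collapse: the arithmetic continuation $44$ of the list above has $d(44) \neq 0$ (else $89$ would be a nullity-$2$ grid below $100$), and the clean subfamily $n' = 2^{2j+1}$, which does stay $\equiv 2 \pmod 6$, breaks at $n' = 32$, so that $d(65) \neq 2$.

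Accordingly, I expect the decisive step to be a genuine structural theorem about the $\Z_2[x]$-factorization of $f_{n'+1}$, pursued along one of two routes. Route (i): exhibit an explicit infinite subsequence built recursively through the doubling identities, tracking the ``bad'' shared roots precisely enough to prove they never reappear while the mod-$6$ class is maintained. Route (ii): reduce coprimality to a multiplicative-order condition on $2$ modulo the orders of the relevant roots of unity, and then seek infinitely many admissible orders inside the progression. Route (ii) threatens to entangle the problem with Artin-type statements on the order of $2$ in arithmetic progressions, which are themselves open, so route (i) seems the more promising; there the crux is to prove that the recursion can be carried through the congruence $n' \equiv 2 \pmod 6$ infinitely often without ever reintroducing a common factor between $f_{n'+1}(x)$ and $f_{n'+1}(x+1)$.
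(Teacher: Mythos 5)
First, a framing point: the statement you were asked to prove is a \emph{conjecture} --- the paper does not prove it. The paper offers only the strictly weaker observation (via Theorem \ref{tiling-quiet-patterns}) that there are infinitely many $n$ with $d(n) \geq 2$, plus the unproved suspicion that $d(2 \cdot 3^k - 1) = 2$. So there is no proof in the paper to compare against; the only question is whether your proposal settles the conjecture, and by your own admission it does not. That said, your reduction is sound and genuinely useful: granting the forward reading of the Boyles dichotomy invoked in Theorem \ref{nullity2-5mod6} (namely that $b$ odd, $3 \mid b$, and $d(b-1) = 0$ together \emph{imply} $d(2b-1) = 2$ --- a direction you should verify against the cited source, since the paper only applies the dichotomy under the hypothesis $d(n) = 2$), the conjecture would follow from the statement that infinitely many $n' \equiv 2 \pmod 6$ satisfy $d(n') = 0$. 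Your target even subsumes the paper's own suspicion, which is the subfamily $n' = 3^k - 1$.

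The genuine gap is that this infinitude statement is never proved, and it is precisely where all the difficulty lives. Route (i) is a plan without its key lemma: you do not exhibit the index map $\psi$, nor even a candidate for it, and your own numerical observations (the failures at $n' = 44$ and at $n' = 32$) show that the natural candidates collapse --- this is evidence of difficulty, not progress toward a proof. Note also that the one known infinite nullity-$0$ family, Sutner's $d(2^k - 1) = 0$, is useless here: $2^k - 1$ is odd, while every element of the class $2 \pmod 6$ is even, so that family misses your residue class entirely. Route (ii) you concede leads into open Artin-type order questions. What you have, then, is a clean reduction plus an honest catalogue of obstructions; as a proof it has a hole exactly where the mathematical content must go, and the statement remains --- for you as for the paper --- a conjecture.
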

	The version of this conjecture with $d(n) = 0$ was proven by Sutner, who showed that $d(2^k - 1) = 0$ \cite{Sutner1989}.
	We suspect that $d(2 \cdot 3^k - 1) = 2$.
	Theorem \ref{tiling-quiet-patterns} tells us that if there exists one grid with nullity $q$, there are infinitely many $n$ such that $d(n) \geq q$.
	So, there are certainly infinitely many grids with nullity at least 2.
	By computer search, we found for $n \leq 25000$ there are 1242 nullity 2 grids, about 5\% of all $n$.
	This is well below our best estimate of about 1 in 12  boards being nullity 2 as given by Corollary \ref{nullity2-5mod12}.
	
	We'd like future work to focus on resolving these conjectures and solving the MCP on grids with nullities greater than 2.
	For any sized grid (in fact, any graph), we can apply the same technique as in Theorem \ref{min-moves-problem-6k-1x6k-1} of finding the answer to the MCP as the solution to an ILP.
	
	We'd also like future work to focus more on understanding the relationships between the nullities of different sized grids.
	Theorem \ref{tiling-quiet-patterns} is a good start to this, but there seem to be many other relationships to discover.
	For example, Knuth shows that if an even parity cover of an $n \times n$ grid is perfect -- every row and column contains elements from the cover -- then we can construct a perfect even parity cover for a $(2n+1) \times (2n+1)$ grid \cite{Knuth_AOCP4A}.
	All the even parity covers constructed in Theorem \ref{tiling-quiet-patterns} are not perfect.
	
	\newpage
	\bibliography{refs.bib}
	\bibliographystyle{amsplain}
\end{document}